\title{On the multiple zeros of a partial theta function}
\author{Vladimir Petrov Kostov\\ 
Universit\'e de Nice, 
Laboratoire de Math\'ematiques, Parc Valrose,\\ 06108 Nice Cedex 2, France,  
e-mail: kostov@math.unice.fr} 
\date{}
\newtheorem{tm}{Theorem}
\newtheorem{rem}[tm]{Remark}
\newtheorem{rems}[tm]{Remarks}
\newtheorem{lm}[tm]{Lemma}
\newtheorem{prop}[tm]{Proposition}
\newtheorem{nota}[tm]{Notation}
\begin{document} 
\maketitle 
\begin{abstract}
We consider the partial theta function 
$\theta (q,x):=\sum _{j=0}^{\infty}q^{j(j+1)/2}x^j$, 
where $x\in \mathbb{C}$ is a variable and $q\in \mathbb{C}$, $0<|q|<1$, 
is a parameter. We show that, for any fixed $q$, if $\zeta$ is a 
multiple zero of the function $\theta (q,.)$,  
then $|\zeta |\leq 8^{11}$. 
\end{abstract}
The series $\theta (q,x):=\sum _{j=0}^{\infty}q^{j(j+1)/2}x^j$ 
in the variables $q$ and $x$ 
converges for $q\in \mathbb{D}_1\backslash 0$, $x\in \mathbb{C}$, 
where $\mathbb{D}_a$ 
stands for the open disk centered at the origin and of radius $a$. 
It defines a {\em partial theta function}. (We recall that 
the Jacobi theta function is 
the sum of the series $\Theta (q,x):=\sum _{j=-\infty}^{\infty}q^{j^2}x^j$ and 
the equality $\theta (q^2,x/q)=\sum _{j=0}^{\infty}q^{j^2}x^j$ holds true.) 
For any fixed $q$, $\theta$ 
is an entire function in $x$. We regard $q$ as a parameter. 

The function $\theta$ finds applications in statistical physics 
and combinatorics (see \cite{So}), also in the theory 
of (mock) modular forms (see \cite{BrFoRh}),  
in asymptotic analysis (see \cite{BeKi}) 
and in Ramanujan-type $q$-series 
(see \cite{Wa}). Its role in the framework of a problem concerning 
 hyperbolic polynomials 
(i.e. real polynomials having all their zeros real) has been discussed in 
the papers \cite{Ha}, \cite{Pe}, \cite{Hu}, \cite{Ost}, \cite{KaLoVi}, 
\cite{KoSh} and \cite{Ko2}). This  
problem has been studied earlier by Hardy, Petrovitch and Hutchinson 
(see \cite{Ha}, \cite{Hu} and \cite{Pe}). Other facts about the function 
$\theta$ can be found in~\cite{AnBe} and~\cite{So}.

\begin{rem}\label{remrem}
{\rm It has been established in \cite{Ko5} 
that for any fixed value of the parameter $q$, the function 
$\theta$ has at most finitely-many multiple zeros. 
For $q\in (0,1)$ there exists a sequence of values of $q$, tending 
to $1$, for which 
$\theta (q,.)$ has double real negative zeros tending to $-e^{\pi}$, 
see \cite{Ko3}.}
\end{rem} 

We prove the following theorem:

\begin{tm}\label{maintm}
For any $q\in \mathbb{D}_1$, any multiple zero of $\theta$ belongs to 
the set $\overline{\mathbb{D}_{8^{11}}}$ ($8^{11}=8589934592$).
\end{tm}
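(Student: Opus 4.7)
\ \ I proceed by contradiction: assume $\zeta$ is a multiple zero of $\theta(q,\cdot)$ with $|\zeta|>8^{11}$, and extract a contradiction from the conditions $\theta(q,\zeta)=0$ and $\theta_x(q,\zeta)=0$ (the subscript denoting the partial derivative in $x$). The central tool is the functional equation $\theta(q,x)=1+qx\,\theta(q,qx)$, which upon iteration yields, for every integer $k\geq 1$,
$$\theta(q,q^kx)\;=\;\frac{\theta(q,x)-P_k(q,x)}{q^{k(k+1)/2}x^k},\qquad P_k(q,x):=\sum_{j=0}^{k-1}q^{j(j+1)/2}x^j.$$
Differentiating in $x$ and substituting $x=\zeta$, using both vanishing conditions, produces the two closed-form relations
$$\theta(q,q^k\zeta)\;=\;-\frac{P_k(q,\zeta)}{q^{k(k+1)/2}\zeta^k},\qquad \theta_x(q,q^k\zeta)\;=\;\frac{k\,P_k(q,\zeta)/\zeta\,-\,P_k'(q,\zeta)}{q^{k(k+3)/2}\zeta^k}.$$

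I would then choose $k$ large enough that $|q^k\zeta|$ is very small. By the Taylor expansions $\theta(q,y)=1+qy+O(y^2)$ and $\theta_x(q,y)=q+O(y)$ at $y=0$, the left-hand sides of the two displayed identities admit upper bounds depending only on $|q|$ and $|q^k\zeta|$, forcing the partial sum $P_k(q,\zeta)$ and its derivative $P_k'(q,\zeta)$ to have moduli comparable to $|q|^{k(k+1)/2}|\zeta|^k$; that is, \emph{much smaller} than their individual summands, whose moduli peak at an index $j^{\ast}\approx \log|\zeta|/\log(1/|q|)$.

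Writing $b_j:=q^{j(j+1)/2}\zeta^j$, the two requirements on $P_k(q,\zeta)$ and $\zeta P_k'(q,\zeta)$ form a $2\times 2$ linear system for the top summands $b_{k-1},b_{k-2}$, with determinant $-1$. One can therefore express $b_{k-1},b_{k-2}$ as linear combinations of the lower summands $b_0,\dots,b_{k-3}$ plus two small error terms; but the sizes of $b_{k-1},b_{k-2}$ are rigidly given by $q$ and $\zeta$, which yields an inequality of the form $|\zeta|^{\alpha k}\leq C(|q|)$ for some explicit $\alpha>0$. Optimising over $k$ (and handling the case where $|q|$ is so small that the choice of a useful $k$ is trivial via an elementary majorant argument on the series) then produces the threshold $|\zeta|\leq 8^{11}$.

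The main obstacle is entirely quantitative: to reach the explicit value $8^{11}$, every constant in the Taylor remainder of $\theta(q,y)$ at $y=0$ and in the lower bounds on $|b_{k-1}|,|b_{k-2}|$ must be tracked sharply. For $|q|$ close to $1$ the number of terms contributing to $P_k(q,\zeta)$ is large, so the rigidity argument above demands that the cancellations forced by the two vanishing conditions stay incompatible with the geometric growth of $|b_j|$ all the way up to $j=k-1$; maintaining this incompatibility uniformly as $|q|\to 1$ is where the explicit constant $8^{11}$ ultimately comes from.
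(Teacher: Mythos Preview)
Your proposal is a genuine sketch of an approach, but it is not a proof, and the place where it fails is precisely the step you flag yourself. The two displayed identities for $\theta(q,q^k\zeta)$ and $\theta_x(q,q^k\zeta)$ are exact rewritings of the hypotheses $\theta(q,\zeta)=0$ and $\theta_x(q,\zeta)=0$: they carry no new information beyond the two original equations. Solving the $2\times2$ system for $b_{k-1},b_{k-2}$ in terms of $b_0,\dots,b_{k-3}$ and the error terms is therefore just another restatement, and the claimed inequality ``$|\zeta|^{\alpha k}\le C(|q|)$'' does not follow from it. What you actually obtain is $b_{k-1}=\sum_{j\le k-3}(j-k+2)b_j+(\text{small})$, and since the right-hand side contains the peak terms $b_j$ with $j$ near $j^\ast$, multiplied by coefficients of size $\sim k-j^\ast$, there is no visible obstruction: for $|q|$ close to $1$ the Gaussian-shaped profile $|b_j|$ is wide enough that these linear combinations can easily reproduce $|b_{k-1}|$. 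You acknowledge that ``maintaining this incompatibility uniformly as $|q|\to 1$'' is the hard part, but supply no mechanism for it; without one, the argument gives no bound on $|\zeta|$ at all, let alone the explicit constant $8^{11}$.

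The paper's proof follows a completely different route. It does not argue by contradiction on a hypothetical multiple zero; instead it shows directly that \emph{every} zero of $\theta(q,\cdot)$ with modulus exceeding $8^{11}$ is simple. The key input is the Jacobi triple product, which gives an explicit factorisation of $\Theta^*(q,x)=\sum_{j=-\infty}^{\infty}q^{j(j+1)/2}x^j$ and shows all its zeros are the simple points $\mu_s=-q^{-s}$. Writing $\theta=\Theta^*-G$ with $G=\sum_{j\le -1}q^{j(j+1)/2}x^j$, one has the trivial bound $|G|\le 1/(|x|-1)$, while the product formula allows an explicit lower bound for $|\Theta^*|$ on small circles about each $\mu_s$. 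For $|x|>8^{11}$ one checks $|\Theta^*|>|G|$ on these circles, so Rouch\'e's theorem forces $\theta$ to have exactly one (hence simple) zero in each such disk. The argument splits into the ranges $|q|\le c_0$ (where there are no multiple zeros at all), $c_0\le|q|\le 1/2$, and $1/2\le|q|<1$, with the last range handled via the parametrisation $1-1/(n-1)\le|q|\le 1-1/n$; the constant $8^{11}$ arises from counting how many product factors $|1+xq^m|$ exceed $8^{\pi^2/6}$ when $|x|>8^{11}$, enough to dominate the losses from the Euler-type estimates $|\prod(1-q^m)|\ge e^{(\pi^2/6)(1-n)}$. None of this structure --- the triple product, the simple zeros of $\Theta^*$, or the Rouch\'e comparison --- appears in your outline.
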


\begin{lm}\label{lm02}
For any $q\in \overline{\mathbb{D}_{c_0}}$, $c_0:=0.2078750206\ldots$, 
the function $\theta$ 
has no multiple zeros.
\end{lm}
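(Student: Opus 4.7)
The plan is to combine $\theta(q,\zeta)=0$ and $\theta'_{x}(q,\zeta)=0$ into a family of necessary inequalities and show these are incompatible when $|q|\leq c_{0}$. The key tool is the functional equation $\theta(q,x)=1+qx\,\theta(q,qx)$ (obtained by reindexing the defining series) together with its derivative; setting $u:=q\zeta$, these convert the two vanishing conditions into the exact values $\theta(q,u)=-1/u$ and $\theta'_{x}(q,u)=1/u^{2}$, which are equivalent to the pair $\sum_{k\geq 0}b_{k}=0$ and $\sum_{k\geq 0}k\,b_{k}=0$, where $b_{k}:=q^{k(k-1)/2}u^{k}$.

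For each integer $m\geq 1$, the linear combination $\sum_{k\geq 0}(k-m)b_{k}=0$, after separating the low-$k$ part from the tail, gives the algebraic identity
$$P_{m}(q,u):=\sum_{k=0}^{m-1}(m-k)\,q^{k(k-1)/2}u^{k}=\sum_{k\geq m+1}(k-m)\,q^{k(k-1)/2}u^{k}.$$
Writing $r:=|q|$ and $t:=|u|=r|\zeta|$, the triangle inequality applied to the right-hand side yields the infinite family of necessary conditions
$$|P_{m}(q,u)|\leq T_{m}(r,t):=\sum_{k\geq m+1}(k-m)\,r^{k(k-1)/2}t^{k}\quad(m\geq 1).$$
For $m=1$ this is the scalar bound $1\leq T_{1}$; for $m=2$ it reads $|2+u|\leq T_{2}$; for $m=3$, $|3+2u+qu^{2}|\leq T_{3}$; and so on.

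The decisive step is to show that when $r\leq c_{0}$, this family admits no common solution $(u,q)$ with $|q|=r$ and $|u|=t$ for any $t\geq 0$. The low-$m$ bounds confine $u$ to a narrow arc on the circle $|u|=t$ (close to the negative real axis at a scale $t\approx r^{-k}$ for some $k\geq 1$), while the tail bounds $T_{m}$ shrink super-exponentially in $m$ (roughly like $r^{m(m+1)/2}t^{m+1}$); for suitably chosen $m$ depending on the regime of $t$, the polynomial $P_{m}(q,u)$ cannot be made as small as $T_{m}(r,t)$, producing the contradiction. The main obstacle will be the quantitative case analysis itself: one must split the range of $t$ into the regimes $t\approx r^{-k}$ (which correspond to the approximate locations of the simple zeros of $\theta(q,\cdot)$) and within each regime pin down the optimal $m$ before verifying the inconsistency; the precise constant $c_{0}=0.2078750206\ldots$ will emerge as the supremum of $r$ for which such a contradiction can be exhibited uniformly in every regime.
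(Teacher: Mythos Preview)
Your setup is correct: the functional equation does give $\theta(q,u)=-1/u$ and $\theta'_x(q,u)=1/u^2$ at $u=q\zeta$, these are equivalent to $\sum_k b_k=0$ and $\sum_k kb_k=0$ with $b_k=q^{k(k-1)/2}u^k$, and the family of inequalities $|P_m(q,u)|\leq T_m(r,t)$ follows. But the proposal has a real gap at the decisive point. You defer the entire ``quantitative case analysis,'' and---more seriously---you assert without justification that ``the precise constant $c_0=0.2078750206\ldots$ will emerge.'' There is no reason it should. That number is not a free parameter to be discovered; in the paper it is \emph{defined} as the unique $r\in(0,1)$ solving
\[
2\sum_{\nu\geq 1}r^{\nu^2/2}=1,
\]
and it arises from a mechanism unrelated to your $|P_m|\leq T_m$ bounds. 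Your inequalities, even if the case split could be pushed through, would be governed by how small $P_m(q,u)$ can get on suitable arcs of $|u|=t$ and how this compares with the tail $T_m$; the threshold $r$ coming out of that optimisation has no a~priori link to the identity above. At best you would prove the lemma with \emph{some} explicit constant in place of $c_0$, which is a weaker (and different) statement.

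For contrast, the paper's proof is a one-line Rouch\'e argument and this is exactly where $c_0$ comes from. On the circle $|x|=|q|^{-k-1/2}$ the single monomial $L=q^{k(k+1)/2}x^k$ has modulus $|q|^{-k^2/2}$, while the sum of the moduli of all remaining terms of $\theta$ is at most $|q|^{-k^2/2}\,\tau(|q|)$ with $\tau(r)=2\sum_{\nu\geq 1}r^{\nu^2/2}$; the inequality $\tau(|q|)\leq 1$ is \emph{exactly} $|q|\leq c_0$. Rouch\'e then gives precisely $k$ zeros of $\theta$ (with multiplicity) inside the $k$-th circle, hence exactly one---necessarily simple---zero in each annulus $|q|^{-k+1/2}<|x|<|q|^{-k-1/2}$. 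No functional equation, no case split, and the constant $c_0$ is forced by the computation rather than hoped for.
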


(A similar result has been formulated independently by A. Sokal and 
J. Forsg{\aa}rd.) 

\begin{proof}
Indeed, set $|x|=|q|^{-k-1/2}$, $k\in \mathbb{N}$. Then 
in the series of $\theta$ the term 
$L:=x^kq^{k(k+1)/2}$ has the largest modulus 
(equal to $|q|^{-k^2/2}$). The sum $M$ of the 
moduli of all other terms is smaller than  
$|q|^{-k^2/2}\tau (|q|)$, where $\tau :=2\sum _{\nu =1}^{\infty}|q|^{\nu ^2/2}$. 
The inequality $1\geq \tau (|q|)$ is equivalent to 
$|q|\leq c_0$. Hence for $|q|\leq c_0$ one has $|L|>M$. Moreover, 
for no zero $\zeta$ 
of $\theta$ does one have $|\zeta |=|q|^{-k-1/2}$. 
For $|q|\leq 0.108$ all zeros $\xi _k$ of $\theta$ are simple, see 
\cite{Ko4}. For any $k$ fixed and for $|q|$ close to $0$ 
one has $\xi _k\sim q^{-k}$ 
(see Proposition~10 in \cite{Ko2}). Hence for 
$|q|\leq c_0$ one has 
$|q|^{-k+1/2}<|\xi _k|<|q|^{-k-1/2}$, 
i.e. all zeros of $\theta$ are simple.
\end{proof}

\begin{proof}[Proof of Theorem~\ref{maintm}.] 
We prove the theorem first in the case $1/2\leq |q|<1$. 
We use the fact that the Jacobi theta function $\Theta$ 
has only simple zeros (see \cite{Wi}), so this is also true for the function 
$\Theta ^*(q,x)=\Theta (\sqrt{q},\sqrt{q}x)=
\sum _{j=-\infty}^{\infty}q^{j(j+1)/2}x^j$. The zeros of $\Theta ^*(q,x)$ 
are all simple and equal $\mu _s:=-1/q^s$, 
$s\in \mathbb{Z}$ (which can be deduced from the form of the 
zeros of $\Theta$, see \cite{Wi}). 
We recall that the Jacobi triple product is the 
equality  
$\Theta (q,x^2)=\prod _{m=1}^{\infty}(1-q^{2m})(1+x^2q^{2m-1})(1+x^{-2}q^{2m-1})$ 
(see \cite{Wi})
from which follows the identity
$\Theta ^*(q,x)=\prod _{m=1}^{\infty}(1-q^m)(1+xq^m)(1+q^{m-1}/x)$.  

\begin{nota}
{\rm Set $G:=\sum _{j=-\infty}^{-1}q^{j(j+1)/2}x^j$. Thus $\theta =\Theta ^*-G$. 
For given $x$ ($|x|>1$) and $q$ we denote by $\kappa$ the 
least value of $m\in \mathbb{N}$ for which one has $|xq^m|<1$. 
Set $Q:=\prod _{m=1}^{\infty}(1-q^m)$, $R:=\prod _{m=1}^{\infty}(1+q^{m-1}/x)$ and 
$U_p^s:=\prod _{m=p}^s(1+xq^m)$, $s\geq p$. By $\mathcal{C}(v,r)$, 
$v\in \mathbb{C}$, 
$r>0$, we denote the circumference (in the $x$-space) 
about $v$ and of radius $r$ and by $\mathcal{D}(v,r)$ the 
corresponding open disk. We set $X_{\rho}:=\{ x\in \mathbb{C}, |x|>\rho , 
\rho >0\}$.} 
\end{nota}

\begin{rems}\label{rems1s2}
{\rm (1) Suppose that $1-1/(n-1)\leq |q|\leq 1-1/n$, $n=3$, $4$, $\ldots$. 
For $s_1>s_2>0$ one has $|\mu _{s_1}|>1$,  
$|\mu _{s_2}|>1$ and $|\mu _{s_1}-\mu _{s_2}|>1/n$ 
(because $|1/q|\geq 1/(1-1/n)>1+1/n$). Hence the two closed disks 
$\overline{\mathcal{D}(\mu _{s_i},1/2n)}$, $i=1,2$, 
do not intersect.

(2) For $x\in X_{\rho}$, $\rho >1$, one has 
$|G|\leq \sum _{j=-\infty}^{-1}\rho ^j=1/(\rho -1)$. }
\end{rems}
 
\begin{prop}\label{propprop}
Suppose that $1-1/(n-1)\leq |q|\leq 1-1/n$, $n=3$, $4$, $\ldots$, 
and that for a given $s\in \mathbb{N}$ the circumference 
$\mathcal{C}(\mu _s,1/2n)$ (hence the closed disk $\mathcal{D}(\mu _s,1/2n)$ 
as well) 
belongs to the set $X_{8^{11}}$. Then at any point of this circumference one has 
$|\Theta ^*|>1>1/(8^{11}-1)\geq |G|$.
\end{prop}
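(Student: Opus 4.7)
The plan is to use the Jacobi triple product identity $\Theta^*(q,x) = Q \cdot R \cdot \prod_{m=1}^{\infty}(1+xq^m)$ recalled in the text, and estimate each factor separately on $\mathcal{C}(\mu_s,1/2n)$. Writing $x=\mu_s+\delta$ with $|\delta|=1/2n$, the single vanishing factor is $1+xq^s=\delta q^s$, which has modulus \emph{exactly} $|q|^s/(2n)$. For every other $m$, $1+xq^m=1-q^{m-s}+\delta q^m$, so the triangle inequality yields
\[|1+xq^m|\;\geq\;\bigl||q|^{m-s}-1\bigr|-|q|^m/(2n).\]

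For the partial product $\prod_{m=1}^{s-1}(1+xq^m)$ I would factor $|q|^{m-s}$ out of each term and substitute $k=s-m$ to obtain
\[\prod_{m=1}^{s-1}|1+xq^m|\;\geq\;|q|^{-s(s-1)/2}\prod_{k=1}^{s-1}\bigl((1-|q|^k)-|q|^s/(2n)\bigr),\]
which equals $|q|^{-s(s-1)/2}\prod_{k=1}^{s-1}(1-|q|^k)\cdot(1-\eta_1)$, with $\eta_1$ controlled by the relative corrections $|q|^s/(2n(1-|q|^k))\leq |q|^s/2<8^{-11}/2$. The analogous treatment of the tail gives $\prod_{m>s}|1+xq^m|\geq P(1-\eta_2)$, where $P:=\prod_{k\geq 1}(1-|q|^k)$. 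Using $|Q|\geq P$, $\prod_{k=1}^{s-1}(1-|q|^k)\geq P$, and $|R|\geq 1-n/|x|$ (expanding $\log R$ against a geometric series, in the spirit of Remarks~\ref{rems1s2}(2)), one combines everything into the working bound
\[|\Theta^*(q,x)|\;\gtrsim\;\frac{P^3\,|q|^{-s(s-3)/2}}{2n}.\]

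The decisive observation is that the hypothesis $|\mu_s|=|q|^{-s}>8^{11}$ forces $s$ to be very large, namely $s>33\log 2/\log(1/|q|)$, which is of order $23n$ when $|q|=1-1/n$. Consequently $|q|^{-s(s-3)/2}\geq(8^{11})^{(s-3)/2}$ is astronomically large, comfortably beating the factor $2n$ as well as the $P^{-3}$-deficit (note that $P^{-1}$ grows at most like $e^{\pi^2 n/6}$ by the Dedekind $\eta$ modular transformation). Thus the inequality $|\Theta^*|>1$ will hold with a huge margin on the circle, after which the bound $|G|\leq 1/(8^{11}-1)$ of Remarks~\ref{rems1s2}(2) supplies the rest of the claim.

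The hard part will be the uniform quantitative bookkeeping across the whole range $n\geq 3$: explicitly lower-bounding $P$, verifying that $\eta_1$, $\eta_2$ and the $|R|$-loss remain benign for every admissible $n$ (including $n$ of macroscopic size compared to $8^{11}$), and confirming that the accumulated slack is enough to pass from ``$\gtrsim$'' to the strict bound $|\Theta^*|>1$ with precisely the constant $8^{11}$ chosen in the theorem.
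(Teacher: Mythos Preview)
Your outline is sound and, if carried through, would prove the proposition; it is also more streamlined than the paper's argument. The paper does not parametrize $x=\mu_s+\delta$. Instead it works with $\kappa$ (the least $m$ with $|xq^m|<1$, which on $\mathcal C(\mu_s,1/2n)$ is essentially $s$), splits $U_1^\infty=\prod_{m\ge1}(1+xq^m)$ into several ranges --- $U_{\kappa+1}^\infty$, the two ``dangerous'' factors $1+xq^{\kappa}$ and $1+xq^{\kappa-1}$, a middle block $U_{\kappa-2-4n}^{\kappa-2}$, a bulk $U_5^{\kappa-3-4n}$, and $U_1^4$ --- and bounds each piece separately: the dangerous factors are paired with $U_1^4$ to absorb the $1/(2n)$ losses, the middle block is estimated as in Lemma~\ref{LLL}, and the bulk supplies at least $5n$ factors each of modulus $>8^{\pi^2/6}$, enough to beat the accumulated $e^{(\pi^2/6)(1-n)}$ deficits from $Q$, $R$, $U_{\kappa+1}^\infty$, and the middle block.

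Your single estimate $|\Theta^*|\gtrsim P^3\,|q|^{-s(s-3)/2}/(2n)$ does the same job in one stroke: the quadratic exponent $s(s-3)/2$ with $s\gtrsim 23n$ overwhelms both the $P^{-3}\lesssim e^{(\pi^2/2)n}$ loss and the $2n$ denominator, by an enormous margin. The paper's decomposition buys an explicit audit trail where each numerical inequality can be checked in isolation; your route makes the underlying mechanism --- Gaussian growth of $|\Theta^*|$ away from its zero set --- transparent. One bookkeeping caveat you flagged yourself: the per-term relative correction bound $|q|^s/(2n(1-|q|^k))\le|q|^s/2$ is correct, but there are $s-1\sim 23n$ such terms, so you must sum them (the total is $O(8^{-11}\log n)$, hence harmless); also watch the small-$k$ end of $\prod_{k=1}^{s-1}\bigl((1-|q|^k)-|q|^s/(2n)\bigr)$, since for $k=1$ the main term $1-|q|$ is only of size $1/n$.
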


Before proving Proposition~\ref{propprop} we deduce 
Theorem~\ref{maintm} from it. 
By the Rouch\'e theorem the functions $\Theta ^*$ and $\theta$ have one 
and the same number of zeros (counted with multiplicity) inside 
$\mathcal{C}(\mu _s,1/2n)$. 
For $\Theta ^*$ this number is $1$, hence $\theta$ has a single 
zero, a simple one,  
inside $\mathcal{C}(\mu _s,1/2n)$. For any fixed $s\in \mathbb{N}$ and for 
$|q|$ sufficiently small ($q\neq 0$) the function $\theta (q,.)$ has a 
zero $\xi _s$ close to $\mu _s$ 
(close in the sense that $(\xi _s-\mu _s)\rightarrow 0$ as 
$q\rightarrow 0$, see \cite{Ko2}). Hence this is the simple zero inside 
$\mathcal{C}(\mu _s,1/2n)$. For $0<|q|\leq 0.108$ the numbers $\xi _s$ 
are all the zeros of $\theta$ (see \cite{Ko4}); these zeros are simple. 
As $|q|$ increases, for certain values of $q$ a confluence 
of certain zeros occurs 
(see \cite{KoSh}). 

Fix $s\in \mathbb{N}$. If for $0<|q|=\alpha \leq 1-1/n$ one has 
$\mathcal{C}(\mu _s,1/2n)\subset X_{8^{11}}$, then 
this inclusion holds true for $0<|q|\leq \alpha$ as well. This means that 
for $0<|q|\leq 1-1/n$ the zeros $\xi _k$ of $\theta$ with $k\geq s$ remain 
distinct, simple and belong to the interiors of the respective 
circumferences $\mathcal{C}(\mu _k,1/2n)$. Hence for 
$1-1/(n-1)\leq |q|\leq 1-1/n$ 
and $|x|>8^{11}$ there are no multiple zeros of $\theta$. This is true for any 
$n=3,4,\ldots$. Hence for $|x|>8^{11}$ and $|q|\in [1/2,1)$, the function 
$\theta$ has no multiple zeros. 

In the proof of Proposition~\ref{propprop} we use the following lemma:

\begin{lm}\label{LLL}
Suppose that $|q|\leq 1-1/b$, $b>1$, and $|x|>1$. 
Then $|Q|\geq e^{(\pi ^2/6)(1-b)}$, $|R|\geq (1-1/|x|)e^{(\pi ^2/6)(1-b)}$ 
and $|U_{\kappa +1}^{\infty}|\geq e^{(\pi ^2/6)(1-b)}$.
\end{lm}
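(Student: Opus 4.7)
The plan is to reduce all three inequalities to a single auxiliary lower bound: for $|q|\leq 1-1/b$,
\[
\prod_{m=1}^\infty(1-|q|^m)\geq e^{(\pi^2/6)(1-b)}.
\]
Granted this, the three bounds follow by factor-by-factor comparison via $|1\pm w|\geq 1-|w|$. For $Q$ the comparison is immediate: $|1-q^m|\geq 1-|q|^m$, so $|Q|\geq\prod_{m\geq 1}(1-|q|^m)$. For $R$, peel off the $m=1$ factor: $|1+1/x|\geq 1-1/|x|$; for $m\geq 2$, the hypothesis $|x|>1$ yields $|1+q^{m-1}/x|\geq 1-|q|^{m-1}/|x|\geq 1-|q|^{m-1}$, whence $|R|\geq(1-1/|x|)\prod_{n\geq 1}(1-|q|^n)$. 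For $U_{\kappa+1}^\infty$, the minimality of $\kappa$ gives $|xq^\kappa|<1$, hence $|xq^{\kappa+n}|\leq |q|^n$ for every $n\geq 1$, and after reindexing $|U_{\kappa+1}^\infty|\geq\prod_{n\geq 1}(1-|q|^n)$. In every case the auxiliary estimate finishes the job.

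To prove the auxiliary estimate I take logarithms, expand $-\log(1-t)$ as a Taylor series, and swap the (non-negative) sums:
\[
-\log\prod_{m=1}^\infty(1-|q|^m)=\sum_{m\geq 1}\sum_{k\geq 1}\frac{|q|^{mk}}{k}=\sum_{k=1}^\infty\frac{1}{k}\cdot\frac{|q|^k}{1-|q|^k}.
\]
The decisive step is the AM-GM inequality applied to the geometric progression $1,|q|,\ldots,|q|^{k-1}$, which gives
\[
1-|q|^k=(1-|q|)(1+|q|+\cdots+|q|^{k-1})\geq k(1-|q|)|q|^{(k-1)/2}.
\]
Substituting back bounds the $k$-th summand by $|q|^{(k+1)/2}/(k^2(1-|q|))\leq |q|/(k^2(1-|q|))$, since $(k+1)/2\geq 1$. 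Summing on $k$ and invoking $\sum 1/k^2=\pi^2/6$ together with $|q|/(1-|q|)\leq b-1$ (which is equivalent to $|q|\leq 1-1/b$) yields $-\log\prod(1-|q|^m)\leq(\pi^2/6)(b-1)$, which is exactly the auxiliary bound.

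The one step that requires a bit of insight is the AM-GM estimate on $1-|q|^k$: it is what inserts the extra factor $1/k$ into the $k$-th summand and therefore produces a convergent $\sum 1/k^2$, with the characteristic constant $\pi^2/6$ appearing in the statement. Without this refinement one is left with only $1/k$ and the series diverges as $|q|\to 1$, giving a useless bound. Everything else — the pointwise triangle-inequality comparison for each factor, and the trivial reindexings for $R$ and $U_{\kappa+1}^\infty$ — is routine, so I expect the write-up to be short once the AM-GM trick is in place.
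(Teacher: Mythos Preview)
Your proof is correct and follows essentially the same route as the paper: reduce all three bounds to the estimate $\prod_{m\geq 1}(1-|q|^m)\geq e^{(\pi^2/6)(1-b)}$ via the same factorwise comparisons, then take logarithms and rewrite $-\ln S=\sum_{k\geq 1}\tfrac{1}{k}\tfrac{|q|^k}{1-|q|^k}$. The only cosmetic difference is in bounding the $k$-th summand: the paper uses the simpler observation that $1+|q|+\cdots+|q|^{k-1}\geq k|q|^{k-1}$ (each term is at least the last one), whereas you invoke AM--GM to get $\geq k|q|^{(k-1)/2}$; both yield the required $1/k^2$ decay and the constant $\pi^2/6$.
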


\begin{proof}
Indeed, $|Q|\geq S:=\prod _{m=1}^{\infty}(1-|q|^m)$. Hence 
$$\begin{array}{rcl}
\ln S&=&-\sum _{m=1}^{\infty}|q|^m-(1/2)\sum _{m=1}^{\infty}|q|^{2m}-
(1/3)\sum _{m=1}^{\infty}|q|^{3m}-\cdots\\ \\ 
&=&-|q|/(1-|q|)-|q|^2/2(1-|q|^2)-|q|^3/3(1-|q|^3)-\cdots\\ \\ 
&=&(-|q|/(1-|q|))T~~,~~{\rm where}~~ 
T~=~1+|q|/2(1+|q|)+|q|^2/3(1+|q|+|q|^2)+\cdots ~.\end{array}$$
Clearly $-|q|/(1-|q|)=1-1/(1-|q|)\in (1-b,0)$ and 
$|q|^s/(s+1)(1+|q|+\cdots +|q|^s)<1/(s+1)^2$. Hence 
$T\in (0,\sum _{s=0}^{\infty}1/(s+1)^2=\pi ^2/6=1.6449\ldots )$ and 
$|Q|\geq S\geq e^{(\pi ^2/6)(1-b)}$.

To obtain the second (resp. the third) 
inequality just observe that for $m\geq 2$ one has 
$|1+q^{m-1}/x|\geq 1-|q|^{m-1}/|x|>1-|q|^{m-1}$ 
(resp. that for $m\geq \kappa +1$ one has 
$|1+xq^m|\geq 1-|xq^{\kappa}||q^{m-\kappa}|>1-|q^{m-\kappa}|$) and then apply the 
first inequality.
\end{proof}

\begin{proof}{Proof of Proposition~\ref{propprop}.} 

\noindent (A) 
One has 
$1/4\leq (1-1/(n-1))^{n-1}\leq 1/e$ and $1/8\leq (1-1/(n-1))^n\leq 1/e$.
\vspace{1mm}

\noindent (B) Suppose that $|x|>8^{11}$. Then $\kappa >11n$. Indeed, as 
$1-1/n\geq |q|\geq 1-1/(n-1)$, 
one has $1/e^{11}\geq (1-1/n)^{11n}\geq |q|^{11n}\geq 
(1-1/(n-1))^{11n}\geq 1/8^{11}$. 
Hence $|x||q|^{11n}\geq |x|/8^{11}>1$ which implies $\kappa >11n$.
\vspace{1mm}

\noindent (C) Consider the product $U_1^{\kappa}$. For 
a point external for all open disks $\mathcal{D}(\mu _i,1/2n)$, 
$i\in \mathbb{N}$, one has 
$|xq^{\kappa}+1|=|q^{\kappa}||x-\mu _{\kappa}|\geq |q^{\kappa}|/2n$ and 
$|xq^{\kappa -1}+1|=|q^{\kappa -1}||x-\mu _{\kappa -1}|\geq |q^{\kappa -1}|/2n$. 
As $\kappa >8$ and  
$|x|\geq |q|^{1-\kappa}$, for $\nu =1$, $\ldots$, $4$ one has 
$|xq^{\nu}+1|\geq |x||q|^{\nu}(1-|xq^{\nu}|^{-1})>|x||q|^{\nu}(1-|x|^{-1/2})$,~so  
$$\begin{array}{lllll}
|xq^{\kappa}+1||xq+1||xq^2+1|&\geq&|q^{\kappa}||x|^2|q|^3
(1-|x|^{-1/2})^2/2n&\geq&  
(1-|x|^{-1/2})^2/2n\\{\rm and}&&&&\\  
|xq^{\kappa -1}+1||xq^3+1||xq^4+1|&\geq& 
|q^{\kappa -1}||x|^2|q|^7(1-|x|^{-1/2})^2/2n&\geq&  
(1-|x|^{-1/2})^2/2n~.\end{array}$$

\noindent (D) For $m\leq \kappa -2$ one has $|x||q|^m>1$ and 
$|1+xq^m|\geq |x||q|^m(1-|q|^{\kappa -1-m}/|xq^{\kappa -1})|\geq 
(1-|q|^{\kappa -1-m})$. Suppose that $l\in \mathbb{N}\cup 0$, 
$\kappa -3\geq l\geq 0$. 
By analogy with 
Lemma~\ref{LLL} one can show that $|U_{\kappa -2-l}^{\kappa -2}|\geq e^{(\pi ^2/6)(1-n)}$.
\vspace{1mm}

\noindent (E) Consider the product 
$|U_{\kappa -2-4n}^{\kappa -2}|=
\prod_{m=\kappa -2-4n}^{\kappa -2}|x||q|^m|1+q^{\kappa -1-m}/xq^{\kappa -1}|$. The 
largest of the factors $|x||q|^m|$ is obtained for $m=\kappa -2-4n$. 
It equals 
$|x||q|^{\kappa -2}|q|^{-4n}>|q|^{-4n}>e^4>8^{\pi ^2/6}+1$ (because 
$8^{\pi ^2/6}=30.5\ldots <e^{3.5}=33.1\ldots$). Thus for 
$m<\kappa -2-4n$ the inequalities $|1+xq^m|\geq |xq^m|-1>8^{\pi ^2/6}$ hold true. 
\vspace{1mm}

\noindent (F) To prove the proposition it remains to show that 
$|\Theta ^*|=|Q||R||U_1^{\infty}|>1$. Set $U_{1}^{\infty}=
U_{\kappa +1}^{\infty}(1+xq^{\kappa})(1+xq^{\kappa -1})
U_{\kappa -2-4n}^{\kappa -2}U_{5}^{\kappa -3-4n}U_1^4$. 
It was shown in (C) that 
$|(1+xq^{\kappa})(1+xq^{\kappa -1})U_1^4|\geq (1-|x|^{-1/2})^4/4n^2~(*)$.
As $\kappa >11n$, there are at least $5n$ factors in the product 
$U_{5}^{\kappa -3-4n}$, and by (E) their moduli are $>8^{\pi ^2/6}$. Denote 
by $P_1$ and $P_2$ the products respectively of $4n$ and $n$ of these factors 
(assumed all distinct). Using Lemma~\ref{LLL} one finds that  
$|P_1||Q||R||U_{\kappa +1}^{\infty}||U_{\kappa -2-4n}^{\kappa -2}|
\geq (8^{\pi ^2/6})^{4n}(1-|x|^{-1})e^{4(\pi ^2/6)(1-n)}>1~(**)$ 
(because $e<8$ and $(1-|x|^{-1})e^{4(\pi ^2/6)}>1$) and 
$|P_2|(1-|x|^{-1/2})^4/4n^2>1~(***)$. Thus 
Proposition~\ref{propprop} follows from 
inequalities $(*)$, $(**)$ and $(***)$.
\end{proof} 

Now we prove Theorem~\ref{maintm} for $c_0\leq |q|\leq 1/2$.  
Lemma~\ref{LLL} implies that 
for $c_0\leq |q|\leq 1/2$ and $|x|>8^{11}$ one has 
$|Q|\geq c_1:=S|_{|q|=1/2}=0.2887880950$, 
$|R|\geq (1-|x|^{-1})c_1>0.2887880949=:c_2$ and $|U_{\kappa +1}^{\infty}|\geq c_1$.
Indeed, $|Q|\geq S$ and $S$ is minimal for $|q|=1/2$. 

We need to modify the proof of Proposition~\ref{propprop} so that it should 
become valid also for $c_0\leq |q|<1/2$. We observe first that 
$\kappa \geq 15$, with equality for $|x|=8^{11}$, $|q|=c_0$. 
Instead of the disks 
$\mathcal{D}(\mu _i,1/2n)$ we consider the disks 
$\mathcal{D}(\mu _i,1/4)$; their respective radii  
are defined 
by the conditions $|q|\leq 1-1/n$ and $|q|\leq 1-1/2$, see part (1) 
of Remarks~\ref{rems1s2}. Thus the displayed inequalities of part (C) 
of the proof of the proposition and inequality $(*)$ of part (F) remain valid 
with $n$ replaced by $2$. 

Set $U_{1}^{\infty}=
U_{\kappa +1}^{\infty}(1+xq^{\kappa})(1+xq^{\kappa -1})
U_{5}^{\kappa -2}U_1^4$. The factor $U_{5}^{\kappa -2}$ contains at least $9$ 
factors and their respective moduli are not less than $8^{11}c_0^s-1$, 
$s=5$, $\ldots$, $13$. Thus

$$\begin{array}{lclcl}|\Theta ^*|&\geq&|Q||R||U_{\kappa +1}^{\infty}|
|(1+xq^{\kappa})(1+xq^{\kappa -1})U_1^4||U_{5}^{\kappa -2}|&&\\ \\ 
&\geq&c_1c_2c_1(1/16)(1-8^{-5.5})^4\prod_{s=5}^{13}(8^{11}c_0^s-1)
&>&1~.~~~~~\Box 
\end{array}$$ 
\end{proof}

\begin{rem}
{\rm The number $8^{11}$ in the formulation of the theorem 
seems not to be optimal. 
The optimal number is not less than $e^{\pi}$, see Remark~\ref{remrem}.}
\end{rem}

\end{document}